\documentclass{article} 


 
 \usepackage[usenames,dvipsnames]{pstricks}
 \usepackage{epsfig}
 \usepackage{pst-grad} 
 \usepackage{pst-plot} 
 \usepackage[space]{grffile} 
 \usepackage{etoolbox} 
 \makeatletter 
 \patchcmd\Gread@eps{\@inputcheck#1 }{\@inputcheck"#1"\relax}{}{}
 \makeatother
\usepackage{comment}



\usepackage[usenames,dvipsnames]{pstricks}
\usepackage{epsfig}
\usepackage{pst-grad} 
\usepackage{pst-plot} 
\usepackage[space]{grffile} 
\usepackage{etoolbox} 
\makeatletter 
\patchcmd\Gread@eps{\@inputcheck#1 }{\@inputcheck"#1"\relax}{}{}
\makeatother

\usepackage[overload]{empheq}

 \usepackage[usenames,dvipsnames]{pstricks}
 \usepackage{mathtools}
 \usepackage{epsfig}
 \usepackage{pst-grad} 
 \usepackage{pst-plot} 
 \usepackage[space]{grffile} 
 \usepackage{etoolbox} 
 \makeatletter 
 \patchcmd\Gread@eps{\@inputcheck#1 }{\@inputcheck"#1"\relax}{}{}
 \makeatother

\usepackage{amsmath}
\usepackage{amssymb}
\usepackage{amsthm}
\usepackage[latin1]{inputenc}
     
\usepackage{graphicx}


\usepackage[usenames,dvipsnames]{pstricks}
\usepackage{epsfig}
\usepackage{pst-grad} 
\usepackage{pst-plot} 

\usepackage{ifthen}
\usepackage{color}


\newcommand{\intav}[1]{\mathchoice {\mathop{\vrule width 6pt height 3 pt depth  -2.5pt
\kern -8pt \intop}\nolimits_{\kern -6pt#1}} {\mathop{\vrule width
5pt height 3  pt depth -2.6pt \kern -6pt \intop}\nolimits_{#1}}
{\mathop{\vrule width 5pt height 3 pt depth -2.6pt \kern -6pt
\intop}\nolimits_{#1}} {\mathop{\vrule width 5pt height 3 pt depth
-2.6pt \kern -6pt \intop}\nolimits_{#1}}}

\def\polhk#1{\setbox0=\hbox{#1}{\ooalign{\hidewidth\lower1.5ex\hbox{`}\hidewidth\crcr\unhbox0}}}

\renewcommand{\div}{\operatorname{div}}

\renewcommand{\div}{\operatorname{div}}

\newcommand{\curl}{\operatorname{curl}}

\newtheorem{teo}{Theorem}

\newtheorem{Definition}{Definition}
\newtheorem{Lemma}{Lemma}

\newtheorem{Proposition}{Proposition}
\newtheorem{Remark}{Remark}
\newtheorem{Assumption}{A}

\usepackage{setspace}
\setstretch{1.15}

\begin{document}

\title{Improved regularity for the $p$-Poisson equation}
\author{Edgard A. Pimentel, Giane C. Rampasso and Makson S. Santos}

\date{\today} 

\maketitle

\begin{abstract}

In this paper we produce new, optimal, regularity results for the solutions to $p$-Poisson equations. We argue through a delicate approximation method, under a smallness regime for the exponent $p$, that imports information from a limiting profile driven by the Laplace operator. Our arguments contain a novelty of technical interest, namely a sequential stability result; it connects the solutions to $p$-Poisson equations with harmonic functions, yielding improved regularity for the former. Our findings relate a smallness regime with improved $\mathcal{C}^{1,1-}$-estimates in the pre\-sence of $L^\infty$-source terms. 
\noindent 
\medskip

\noindent \textbf{Keywords}:  $p$-Poisson equations; Regularity in H\"older spaces; Regularity transmission by approximation methods.

\medskip 

\noindent \textbf{MSC(2010)}: 35B65; 35J60; 35J70; 49N60; 49J45.
\end{abstract}

\vspace{.1in}

\section{Introduction}\label{sec_introduction}

In this paper we examine the regularity of the weak (distributional) solutions to
\begin{equation}\label{eq_main}
	\div\left(\left|Du\right|^{p-2}Du\right)\,=\,f\;\;\;\;\;\mbox{in}\;\;\;\;\;B_1,
\end{equation}
where $p>2$ and $f\in L^\infty(B_1)$.

We prove that, if $f\in L^\infty(B_1)$, the gradient of the solutions is asymptotically Li\-pschitz; the exponent $p$ is supposed to satisfy certain approximation conditions. Our methods combine approximation techniques with preliminary compactness and loca\-lization arguments, very much inspired by ideas introduced in \cite{caffarelli89}.

The $p$-Laplacian operator occupies a prominent role in the study of nonlinear partial differential equations (PDEs). Partly due to its rich nonlinear structure, several deve\-lopments on the topic have been known. We refer the reader to the monograph \cite{lindqvist1} for a fairly general account of the theory.  See also \cite{heinonen1}, \cite{evansnote} and references thereinto.

A distinctive feature of \eqref{eq_main} is the dependence of the diffusivity coefficient $|Du|^{p-2}$ on the parameter $p\geq1$. Indeed, as $p$ varies, the $p$-Laplacian operator finds applications in a number of disciplines, covering a various range of topics; we refer the reader to \cite{evansnote} for a neatly presented discussion on that topic.

The first developments in the regularity theory of the solutions to \eqref{eq_main} appeared in \cite{uralceva}, \cite{uhlenbeck} and \cite{evans82}. In those papers, the authors consider the degenerate setting $p> 2$ and establish H\"older continuity of the gradient for the solutions to \eqref{eq_main}. 

In \cite{DiB}, the author considers a more general variant of the $p$-Laplacian given by
\[
	\div a(x,u,Du)\,=\,b(x,u,Du)\,+\,f(x)\;\;\;\;\;\mbox{in}\;\;\;\;\;B_1,
\]
where $a:\mathbb{R}^{2d+1}\to\mathbb{R}^d$, $b:\mathbb{R}^{2d+1}\to\mathbb{R}$ and $f:B_1\to\mathbb{R}$ satisfy natural conditions. The assumptions on the vector field $a$ are such that it includes \eqref{eq_main} for $p>1$. The main result in the paper is that solutions are of class $\mathcal{C}^{1,\beta}$, locally, for some $\beta\in (0,1)$, unknown. A distinctive feature of \cite{DiB} is that its arguments account for both the degenerate and the singular cases through a unified approach. 

Regularity in the purely singular setting is the subject of \cite{Lewis}. In that paper, the author supposes $1<p<2$ and proves that solutions to \eqref{eq_main} are locally of class $\mathcal{C}^{1,\beta}$ for some $\beta\in(0,1)$, unknown. The methods in this work are purely variational. We also mention the developments reported in \cite{Tolksdorf}.

In \cite{kumimado} the authors establish pointwise estimates for solutions to possibly degenerate equations of the form 
\[
	-\div\,A(Du,x)\,=\,\mu\;\;\;\;\;\mbox{in}\;\;\;\;\;B_1,
\] 
where $A:\mathbb{R}^d\times B_1\to\mathbb{R}^d$ satisfies usual structural conditions and $\mu:B_1\to\mathbb{R}$ is a (Borel) measure with finite mass. The conditions imposed on $A$ accommodate \eqref{eq_main} even in the presence of coefficients. 

By resorting to Wolff-type of potentials associated with $\mu$, the authors unveil a pass-through mechanism transmitting regularity properties from the data to the solutions. As a consequence, they produce both Schauder and Calder\'on-Zygmund estimates, in H\"older and Sobolev spaces, respectively. See further \cite{falamuito1} and \cite{falamuito2}.

A number of progresses in the regularity theory for \eqref{eq_main} have been obtained in the plane $\mathbb{R}^2\sim\mathbb{C}$. This is due to key properties and notions available in complex analysis, such as the complex gradient and quasiregular mappings. 

The first important achievement in this direction appeared in \cite{Iwaniec1}. In that work, the authors prove that $p$-harmonic functions $u:B_1\subset\mathbb{R}^2\to\mathbb{R}$ are such that
\[
	u\,\in\,\mathcal{C}^{k,\alpha}_{loc}(B_1)\,\cap\,W^{2+k,q}_{loc}(B_1),
\]
with
\[
	k\,+\,\alpha\,=\,\frac{1}{6}\left(7\,+\,\frac{1}{p\,-\,1}\,+\,\sqrt{1\,+\,\frac{14}{p\,-\,1}\,+\,\frac{1}{\left(p\,-\,1\right)^2}}\right)
\]
and
\[
	1\,\leq\,q\,<\,\frac{2}{2\,-\,\alpha}.
\]	
Still in the planar setting, recent developments relate to the so-called $\mathcal{C}^{p'}$-regularity conjecture. This important open problem in regularity theory is motivated by the existence of solutions to \eqref{eq_main} in $\mathcal{C}^{1,\frac{1}{p-1}}$. Indeed, the (radial) function 
\[
	u(x):=|x|^{1+\frac{1}{p-1}}
\]
solves 
\[
	\Delta_p u\,\sim\, 1
\] 
in the weak sense. In \cite{lindgrenlindqvist1}, the authors prove that solutions to \eqref{eq_main}, in the case $d=2$, are of class $\mathcal{C}^{1,\frac{1}{p-1}-\varepsilon}$, for every $\varepsilon>0$. In case the source term $f\in L^q(B_1)$, for $q\in(2,\infty)$, the authors obtain sharp regularity of the solutions. 

A proof of the $\mathcal{C}^{p'}$-regularity conjecture in the plane is presented in \cite{teixurb}. In that paper, the authors combine quasiregular estimates \cite{savetheday} with the notion of \emph{small correctors} to perform a tangential analysis \emph{importing regularity from the homogeneous setting}. An ingredient of paramount relevance is a gradient-dependent oscillation control. This estimate endogenously accounts for the regions where the equation potentially degenerates. See also \cite{teitowards}.

The present paper advances the regularity theory for \eqref{eq_main} by \emph{importing information from the Laplace equation}. Our arguments rely on a new approximation strategy, unlocked by a novel \emph{sequential stability result}; see Proposition \ref{prop_stability}. We believe the rationale underlying this proposition can be adapted to a larger class of problems, yielding further information on specific perturbations of the Laplace operator. Our main result reads as follows:

\begin{teo}\label{teo_main}
Let $u\in W^{1,p}(B_1)$ be a weak solution to \eqref{eq_main}. Suppose $f\in L^\infty(B_1)$. Given $\alpha\in(0,1)$, there exists $\varepsilon =  \varepsilon(d,\alpha) >0$ such that, if $p>2$ satisfies a proximity regime of the form
\begin{equation}\label{eq_proxpint}
	\left|p\,-\,2\,\right|\,<\,\varepsilon,
\end{equation}
we have $u\in\mathcal{C}^{1,\alpha}(B_{1/2})$. In addition, there exists $C>0$ such that
\[
	\left\|u\right\|_{\mathcal{C}^{1,\alpha}(B_{1/2})}\,\leq\,C\left(\left\|u\right\|_{L^\infty(B_1)}\,+\,\left\|f\right\|_{L^\infty(B_1)}\right).
\]
\end{teo}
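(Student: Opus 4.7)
\emph{Step 1 (normalization and approximation lemma).} By a standard scaling--multiplication argument I would first reduce to the normalized regime $\|u\|_{L^\infty(B_1)}\le 1$, $\|f\|_{L^\infty(B_1)}\le \varepsilon_0$, where $\varepsilon_0>0$ is to be fixed. The main technical device is an approximation lemma of Caffarelli type: for every $\delta>0$ there is $\varepsilon>0$ so that, whenever a normalized solution $u$ satisfies the proximity regime \eqref{eq_proxpint} (and $\varepsilon_0\le \varepsilon$), there exists a harmonic function $h$ in $B_{3/4}$ with $\|u-h\|_{L^\infty(B_{3/4})}\le \delta$. This is proved by contradiction using Proposition \ref{prop_stability}: assuming otherwise produces sequences $p_n\to 2$, $f_n\to 0$, $u_n$ that stay uniformly bounded yet uniformly far from every harmonic function; the sequential stability result identifies the limit as a harmonic function, giving the contradiction.

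\emph{Step 2 (oscillation decay at one scale).} Fix $\alpha\in(0,1)$. Interior estimates for harmonic functions yield $\|h\|_{C^{1,1}(B_{1/2})}\le C(d)$, so, setting $\ell(x):=h(0)+Dh(0)\cdot x$, one has $\sup_{B_\rho}|h-\ell|\le C(d)\rho^2$ for $\rho\in(0,1/2)$. I would pick $\rho$ so that $C(d)\rho^{1-\alpha}\le 1/2$, and then feed $\delta:=\rho^{1+\alpha}/2$ into the approximation lemma; combining both estimates produces an affine $\ell$ with $|\ell(0)|+|D\ell|\le C$ such that
\[
	\sup_{B_\rho}|u-\ell|\,\le\,\rho^{1+\alpha}.
\]

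\emph{Step 3 (iteration and conclusion).} Next I rescale: $v(x):=\rho^{-(1+\alpha)}\bigl(u(\rho x)-\ell(\rho x)\bigr)$ is normalized in $B_1$ and a direct computation shows it satisfies $\operatorname{div}\bigl(|\rho^{-\alpha} b+Dv|^{p-2}(\rho^{-\alpha}b+Dv)\bigr)=\rho^{1-\alpha(p-1)}\tilde f$ in $B_1$, with $\tilde f\in L^\infty$ of norm bounded by $\|f\|_{L^\infty}$ and $b=D\ell$. The \emph{main obstacle} is precisely here: the affine shift destroys the pure $p$-Poisson form, and the iteration does not close trivially. I would resolve this via an alternative argument enabled by the small-$p$ regime. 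If $|b|$ is small compared to $\rho^{\alpha}$, factoring out produces an equation of $p$-Poisson type with a smaller right-hand side, and Steps 1--2 apply again to $v$, so the inductive loop proceeds. If $|b|$ is comparable to or larger than $\rho^{\alpha}$, the shifted operator is uniformly elliptic with ellipticity constants arbitrarily close to $1$ when $|p-2|<\varepsilon$, and classical Schauder / Cordes--Nirenberg theory furnishes the required $C^{1,\alpha}$ estimate directly. Iterating yields affine functions $\ell_n$ with $\sup_{B_{\rho^n}}|u-\ell_n|\le \rho^{n(1+\alpha)}$ and slopes converging geometrically, which gives $u\in C^{1,\alpha}$ at $0$ together with the claimed quantitative bound. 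Since the equation, the normalization, and the approximation lemma are translation invariant, repeating the argument at every $x_0\in B_{1/2}$ (after the natural interior scaling that also reduces $\|f\|_\infty$) upgrades the pointwise estimate to $u\in\mathcal{C}^{1,\alpha}(B_{1/2})$.
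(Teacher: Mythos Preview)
Your high-level strategy---compactness via sequential stability, harmonic approximation, a one-step oscillation decay, iteration, and a small-gradient/large-gradient dichotomy---matches the paper's. The substantive gap is exactly the point you flag in Step~3. Subtracting the affine function $\ell$ destroys the $p$-Poisson structure, and your proposed fix (``factoring out produces an equation of $p$-Poisson type'') is not justified: the shifted operator $\div\bigl(|a+Dv|^{p-2}(a+Dv)\bigr)$ is \emph{not} the $p$-Laplacian of $v$, so the approximation lemma of Step~1, proved only for genuine $p$-Poisson solutions, does not apply to your rescaled $v$. If one tries to repair this by adding $\rho^{-\alpha}b\cdot x$ back to $v$, the resulting function is $\rho^{-(1+\alpha)}\bigl(u(\rho x)-a\bigr)$, i.e.\ the rescaling that subtracts only the \emph{constant} part of $\ell$; but this is normalized only when $|b|\rho\lesssim\rho^{1+\alpha}$, which your Step~2 does not provide. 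Your $L^\infty$-only approximation also gives no control relating $b=Dh(0)$ to $Du(0)$, so the dichotomy on $|b|$ does not cleanly transfer to intrinsic information about $u$.

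The paper sidesteps the obstacle by never subtracting an affine function. It upgrades the approximation lemma to a $\mathcal{C}^1$-small-corrector statement (Proposition~\ref{prop_smallcorr}): there is $\xi$ with $|\xi|,|D\xi|<\delta$ and $u+\xi$ harmonic. This $\mathcal{C}^1$-closeness yields a \emph{gradient-dependent} one-step bound (Proposition~\ref{prop_osc1}),
\[
\sup_{B_\rho}|u-u(0)|\le \rho^{1+\alpha}+|Du(0)|\,\rho,
\]
in which only the constant $u(0)$ is removed. The rescaling $v_k(x)=\bigl(u(\rho^kx)-u(0)\bigr)\big/\bigl(\rho^{k(1+\alpha)}+|Du(0)|\rho^k\bigr)$ is then an exact, normalized $p$-Poisson solution, and the iteration closes with no case distinction (Proposition~\ref{prop_erre1}). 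The small/large dichotomy on $|Du(0)|$ versus $r^\alpha$ is applied only \emph{once}, after the full iteration, to extract the $\mathcal{C}^{1,\alpha}$ estimate (Proposition~\ref{prop_gradsmall} and Section~\ref{subsec_gradlarge}). In short: replace ``$L^\infty$-close harmonic approximation plus affine subtraction'' by ``$\mathcal{C}^1$-small corrector plus constant subtraction''; the extra $|Du(0)|\rho$ term is what carries the gradient through the iteration without breaking the equation.
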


Theorem \ref{teo_main} states that for each $\alpha\in(0,1)$ we can find a smallness regime for $p$, depending on $\alpha$, so that solutions are locally of class $\mathcal{C}^{1,\alpha}$. We emphasize that, as $\alpha\to1$, the approximation regime in \eqref{eq_proxpint} requires $\varepsilon\to 0$.

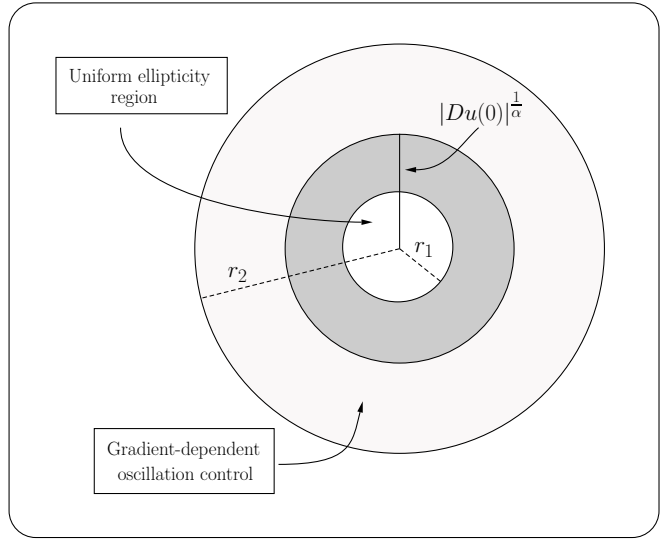
\begin{figure}[h!]
\center
\psscalebox{.350 .350} 
{
\begin{pspicture}(0,-10.181818)(24.727272,10.181818)
\definecolor{colour0}{rgb}{0.98039216,0.972549,0.972549}
\definecolor{colour1}{rgb}{0.8,0.8,0.8}
\pscircle[linecolor=black, linewidth=0.04, fillstyle=solid,fillcolor=colour0, dimen=outer](14.854546,0.81818175){7.8}
\pscircle[linecolor=black, linewidth=0.04, fillstyle=solid,fillcolor=colour1, dimen=outer](14.854546,0.81818175){4.368}
\pscircle[linecolor=black, linewidth=0.04, fillstyle=solid, dimen=outer](14.783636,0.88909084){2.113091}
\psline[linecolor=black, linewidth=0.04](14.854546,0.81818175)(14.854546,5.1861815)
\psline[linecolor=black, linewidth=0.04, linestyle=dashed, dash=0.17638889cm 0.10583334cm](14.854546,0.81818175)(7.3665457,-1.0538182)
\rput[bl](16.33891,5.498182){\Huge{$\left|Du(0)\right|^\frac{1}{\alpha}$}}
\psline[linecolor=black, linewidth=0.04, linestyle=dashed, dash=0.17638889cm 0.10583334cm](16.414545,-0.42981824)(14.854546,0.81818175)
\psbezier[linecolor=black, linewidth=0.04, arrowsize=0.05291667cm 4.0,arrowlength=2.0,arrowinset=0.0]{->}(17.854546,5.418182)(17.154545,4.7781816)(16.654545,3.8181818)(15.054545,3.8181817626953123)
\rput[bl](15.418909,0.5061818){\Huge{$r_1$}}
\rput[bl](8.330909,-0.42981824){\Huge{$r_2$}}
\rput[bl](2.290909,7.018182){\huge{Uniform ellipticity}}
\rput[bl](3.8909092,6.2181816){\huge{region}}
\psbezier[linecolor=black, linewidth=0.04, arrowsize=0.05291667cm 4.0,arrowlength=2.0,arrowinset=0.0]{<-}(13.832142,1.8251342)(11.009739,1.8320866)(4.254545,2.6181817)(4.254545,5.418181762695313)
\rput[bl](3.7454545,-7.163636){\huge{Gradient-dependent}}
\rput[bl](4.1454544,-7.9636364){\huge{oscillation control}}
\psframe[linecolor=black, linewidth=0.04, dimen=outer](10.109091,-6.0)(3.2363636,-8.509091)
\psbezier[linecolor=black, linewidth=0.04, arrowsize=0.05291667cm 4.0,arrowlength=2.0,arrowinset=0.0]{->}(10.254545,-7.3818183)(13.054545,-7.3818183)(13.054545,-6.581818)(13.454545,-4.9818182373046875)
\psframe[linecolor=black, linewidth=0.04, dimen=outer](8.327272,8.109091)(1.7818182,5.927273)
\psframe[linecolor=black, linewidth=0.04, dimen=outer, framearc=0.1](24.727272,10.181818)(0.0,-10.181818)
\end{pspicture}
}
\caption{The value of the gradient at the origin sets the threshold at which the regularity regime switches. Within balls of radius $r_1<\left|Du(0)\right|$, we are in the setting of uniformly elliptic diffusions. Meanwhile, in the case $\left|Du(0)\right|<r_2$ we resort to a gradient-dependent oscillation control based on the notion of $\mathcal{C}^1$-small correctors.}
\end{figure}


The proof of Theorem \ref{teo_main} depends on a delicate iterative method. Our goal is to control the oscillation of the solutions within balls of arbitrary radii $0<r\ll 1$. Given such a radius, we examine two (exclusive) regimes. In fact, either $r>\left|Du(0)\right|^{1/\alpha}$ or otherwise. In the former case, we turn to the notion of \emph{$\mathcal{C}^1$-small correctors}, first introduced in \cite{teixurb}. These structures enable us to produce a finer oscillation control, intrinsically gradient-dependent. 

For the latter, we frame the problem in the theory of uniformly elliptic equations. Here, we also obtain an estimate on the oscillation of the solutions. Once we have exa\-mined both regimes, we verify they are in agreement. Then, we obtain the desired result in the entire domain (locally). For a representation of this gradient-based threshold, and the regime-switching procedure, we refer to the Figure 1.




The remainder of this paper is structured as follows: Section \ref{sec_pmma} gathers a few preliminaries used in the paper and details the main assumptions under which we work. In Section \ref{sec_stabilityapprox}, we examine (sequential) stability properties of the solutions to \eqref{eq_main}. The proof of Theorem \ref{teo_main} is the object of Section \ref{sec_proofteo2}. 

\section{A few preliminaries}\label{sec_pmma}

In what follows we collect basic facts and detail our main assumptions. We start with the definition of weak solution to \eqref{eq_main}, in the distributional sense.

\begin{Definition}[Weak solution]
We say that $u\in W^{1,p}(B_1)$ is a weak solution to \eqref{eq_main} if it verifies
\[
	\int_{B_1}\left|Du\right|^{p-2}Du\cdot D\varphi\,dx\,+\,\int_{B_1}f\varphi \,dx\,=\,0
\]
for every $\varphi\in\mathcal{C}^\infty_0(B_1)$.
\end{Definition}

Our arguments rely on the compactness of the solutions to \eqref{eq_main}. For the sake of completeness, we state it here in the form of a lemma.

\begin{Lemma}[Preliminary compactness]\label{lem_reg}
Let $u\in W^{1,p}(B_1)$ be a weak solution to \eqref{eq_main}. Suppose $f\in L^q(B_1)$ for $q\in(2d,\infty]$. Then, $u\in\mathcal{C}^{1,\beta}_{loc}(B_1)$, for some $\beta\in(0,1)$, and there exists a constant $C>0$ such that
\[
	\left\|u\right\|_{\mathcal{C}^{1,\beta}(B_{1/2})}\,\leq\,C\left(\left\|u\right\|_{L^\infty(B_1)}\,+\,\left\|f\right\|_{L^q(B_1)}\right),
\]
where $C=C(d,p,q)$.
\end{Lemma}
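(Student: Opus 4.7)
The statement is an instance of the classical $C^{1,\alpha}$ interior regularity theory for weak solutions of quasilinear elliptic equations of $p$-Laplacian type. The plan is to reduce to a normalized setting and then invoke the theorems of DiBenedetto \cite{DiB}, Tolksdorf \cite{Tolksdorf}, and Uhlenbeck \cite{uhlenbeck}.

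The first step is to reduce to a problem with unit-size data. Set
\[
    K \,:=\, \left\|u\right\|_{L^\infty(B_1)} \,+\, \left\|f\right\|_{L^q(B_1)} \,+\, 1
\]
and define $v := u/K$. A direct computation shows that $v$ is a weak solution of
\[
    \div\left(\left|Dv\right|^{p-2}Dv\right)\,=\,K^{1-p}f\;\;\;\;\mbox{in}\;\;\;\;B_1,
\]
with $\|v\|_{L^\infty(B_1)} \leq 1$ and, since $p>2$ and $K \geq 1$, also $\|K^{1-p}f\|_{L^q(B_1)} \leq 1$. Hence $v$ solves the same structural equation with normalized data.

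Next, I would invoke the classical interior $C^{1,\beta}$ regularity result for weak solutions of degenerate equations with $L^q$-source. Because $q > 2d > d$, the source $K^{1-p}f$ has sufficient integrability for the gradient to be H\"older continuous: applying \cite{DiB} (or equivalently \cite{Tolksdorf} for smooth $f$ followed by an approximation argument), one obtains a universal exponent $\beta = \beta(d,p) \in (0,1)$ and a constant $C_0 = C_0(d,p,q) > 0$ such that
\[
    \left\|v\right\|_{\mathcal{C}^{1,\beta}(B_{1/2})} \,\leq\, C_0\left(\left\|v\right\|_{L^\infty(B_1)} \,+\, \left\|K^{1-p}f\right\|_{L^q(B_1)}^{1/(p-1)}\right) \,\leq\, 2C_0.
\]
Unwinding the normalization yields the stated estimate for $u$, with the trailing additive constant absorbed in the usual way.

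The main technical obstacle lies not in the reduction itself but in the underlying interior $C^{1,\beta}$ estimate: its proof depends on the delicate analysis of the regularized equation $\div((|Du|^2+\epsilon)^{(p-2)/2}Du) = f_\epsilon$ and $\epsilon$-independent estimates on the auxiliary quantity $(|Du|^2+\epsilon)^{(p-1)/2}$ via Caccioppoli-type inequalities and De Giorgi iteration. Since this machinery is classical and already fully developed in the references invoked, I would simply cite it rather than reproduce it, which is consistent with the role of this result as a preliminary compactness ingredient.
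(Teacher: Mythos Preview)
Your proposal is correct and aligns with the paper's treatment: the paper does not prove this lemma at all but simply refers the reader to \cite{DiB} (and \cite{lindqvist1}), exactly as you do after your normalization step. Your scaling reduction $v=u/K$ is a standard and correct way to package the dependence of the constant on the data before invoking the classical estimate, so your write-up is in fact slightly more detailed than the paper's one-line citation while following the same route.
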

For the proof of Lemma \ref{lem_reg}, we refer the reader to \cite{DiB}; see also \cite{lindqvist1} and the references therein. Next, we detail the assumptions on the data of the problem. We start with a condition on the exponent $p$, confining the analysis to the degenerate setting.

\begin{Assumption}[Degenerate setting]\label{assump_p}
We suppose $p\,>\,2$.
\end{Assumption}

Also, we impose integrability conditions on the source term $f$.	

\begin{Assumption}[Source term]\label{assump_f}
We work under the assumption $f\in L^\infty(B_1)$. 
\end{Assumption}





In the next section we produce a result on the stability of the solutions. In fact, it unlocks a new path connecting \eqref{eq_main} and the Laplace equation.

\section{Sequential stability of weak solutions}\label{sec_stabilityapprox}

When studying the regularity of the solutions to \eqref{eq_main} through geometric techniques, we rely on stability properties. 
We move forward with a proposition on the sequential stability of weak solutions to \eqref{eq_main}.

\begin{Proposition}[Sequential stability of weak solutions]\label{prop_stability}
Let $(p_n)_{n\in\mathbb{N}}\subset\mathbb{R}$ and $(f_n)_{n\in\mathbb{N}}\subset L^\infty(B_1)$ be sequences such that 
\begin{equation*}
	|p_n\,-\,2|\,+\,\left\|f_n\right\|_{L^\infty(B_1)}\,\leq\,\frac{1}{n}.
\end{equation*}
Let $(u_n)_{n\in\mathbb{N}}\subset W^{1,p}(B_1)$ be such that
\begin{equation}\label{eq_stablebesgue}
	\div(|Du_n|^{p_n-2}Du_n)\,=\,f_n \;\;\;\;\; \mbox{in} \;\;\;\;\;B_1.
\end{equation}
Suppose further there exists $u_{\infty}\in \mathcal{C}^1(B_1)$ such that $u_n$ converges to $u_{\infty}$ in $\mathcal{C}^{1}(B_1)$. Then $u_{\infty}$ is a weak solution to 
\[
	\Delta u_{\infty}\,=\,0 \;\;\;\;\; \mbox{in} \;\;\;\;\; B_{9/10}.
\]
\end{Proposition}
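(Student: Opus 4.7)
\textbf{Proof plan for Proposition \ref{prop_stability}.}

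The strategy is to pass to the limit in the weak formulation
\[
	\int_{B_1}|Du_n|^{p_n-2}Du_n\cdot D\varphi\,dx\,+\,\int_{B_1}f_n\varphi\,dx\,=\,0
\]
for an arbitrary $\varphi\in\mathcal{C}^\infty_0(B_{9/10})$, and show that the limit identity is precisely the weak formulation of $\Delta u_\infty=0$ in $B_{9/10}$. The source term is immediate: since $\|f_n\|_{L^\infty(B_1)}\to 0$ and $\varphi$ is bounded, the second integral tends to zero. All the work is therefore in the nonlinear flux term.

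The core step is to prove that $|Du_n|^{p_n-2}Du_n\to Du_\infty$ uniformly on $\overline{B_{9/10}}$. Writing
\[
	|Du_n|^{p_n-2}Du_n\,-\,Du_\infty\,=\,\bigl(|Du_n|^{p_n-2}\,-\,1\bigr)Du_n\,+\,(Du_n\,-\,Du_\infty),
\]
the second summand tends to zero uniformly by the $\mathcal{C}^1$-convergence hypothesis. For the first, let $M:=\sup_n\|Du_n\|_{L^\infty(B_1)}<\infty$, which is finite since $u_n\to u_\infty$ in $\mathcal{C}^1(B_1)$. The issue is the apparent singularity of $|Du_n|^{p_n-2}$ where $Du_n$ vanishes (for $p_n<2$); this is resolved by controlling the scalar function $g_n(t):=t|t^{p_n-2}-1|=|t^{p_n-1}-t|$ on $[0,M]$. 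One checks, by splitting into $t\in[0,1]$ and $t\in[1,M]$ and locating the maximum via an elementary derivative computation, that $\sup_{t\in[0,M]}g_n(t)=O(|p_n-2|)\to 0$. This gives the desired uniform convergence of the flux. (Alternatively, one may argue by dominated convergence: pointwise convergence of the integrand holds at every point, and the integrand is uniformly bounded by $M^{p_n-1}\leq\max(1,M)$ times $|D\varphi|$.)

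With uniform convergence of $|Du_n|^{p_n-2}Du_n$ to $Du_\infty$ on the compact set $\overline{\sop \varphi}\subset B_{9/10}$, passing to the limit yields
\[
	\int_{B_1}Du_\infty\cdot D\varphi\,dx\,=\,0\qquad\text{for every }\varphi\in\mathcal{C}^\infty_0(B_{9/10}),
\]
which is the weak formulation of $\Delta u_\infty=0$ in $B_{9/10}$. The main obstacle, as indicated, is the potential degeneracy when $p_n<2$ and $Du_n$ is small; however, the elementary bound on $g_n$ (or equivalently the uniform boundedness of the flux plus pointwise convergence) circumvents this cleanly because $p_n-1$ is bounded away from zero along the sequence.
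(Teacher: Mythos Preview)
Your argument is correct and follows the same overall route as the paper: pass to the limit in the weak formulation, dispose of the source term by $\|f_n\|_{L^\infty}\to 0$, and show that the flux $|Du_n|^{p_n-2}Du_n$ converges to $Du_\infty$. The paper carries out the last step via the Dominated Convergence Theorem (pointwise a.e.\ convergence plus the uniform bound $|Du_n|^{p_n-1}\leq C$), which is exactly the alternative you sketch at the end.

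The one noteworthy difference is in how the gradient bound $M=\sup_n\|Du_n\|_{L^\infty}<\infty$ is obtained. You extract it directly from the hypothesis $u_n\to u_\infty$ in $\mathcal{C}^1(B_1)$, which is all that is needed. The paper instead invokes Lemma~\ref{lem_reg} (the $\mathcal{C}^{1,\beta}$ a~priori estimate from \cite{DiB,Tolksdorf}), with a remark on why the constants there are uniform in $n$. Your route is more economical here, since the compactness lemma is not actually required once $\mathcal{C}^1$-convergence is assumed. Your primary argument---the quantitative estimate $\sup_{[0,M]}\bigl|t^{p_n-1}-t\bigr|=O(|p_n-2|)$ giving \emph{uniform} convergence of the flux---is a mild strengthening of what the paper proves, and is correct as stated.
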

\begin{proof}
For every $\varphi\in\mathcal{C}^{\infty}_{0}(B_{1}),$ we have
\begin{align*}
    \left|\int_{B_{9/10}}Du_{\infty}\cdot D\varphi\, dx\right|\,&\leq\,\int_{B_{1}}\left(|D\varphi|\left|Du_{\infty}\,-\,|Du_n|^{p_n-2}Du_n\right|\right)\,dx\,+\,\int_{B_{1}}|f_n||\varphi|\,dx\\
    	&\leq\,C\int_{B_1}\left| Du_{\infty}\,-\,|Du_n|^{p_n-2}Du_n\right|dx\,+\,C\left\|f_n\right\|_{L^{\infty}(B_1)}.
\end{align*}
Lemma \ref{lem_reg} yields
\[
	u_n\,\in\,\mathcal{C}^{1,\beta}_{loc}(B_1)\;\;\;\;\;\mbox{and}\;\;\;\;\;\left\|u_n\right\|_{\mathcal{C}^{1,\beta}(B_{9/10})}\,\leq\,C,
\]
for some $\beta\in(0,1)$ and some $C>0$. We note that neither $\beta$ nor $C$ depend on $n\in\mathbb{N}$; see \cite{DiB,Tolksdorf}. Hence,
\[
	\left|Du_{\infty}\,-\,|Du_n|^{p_n-2}Du_n\right|\,\leq\, \left\|Du_{\infty}\right\|_{L^{\infty}(B_1)}\,+\,\left|Du_n\right|^{p_n-1}\,\leq\,C.
\]
Therefore, since $|Du_n|^{p_n-2}Du_n\rightarrow Du_{\infty}$ a.e.-$x\in B_1$ as $n\rightarrow\infty,$ the Lebesgue Dominated Convergence Theorem implies
\[
	\left|\int_{B_{1}}Du_{\infty}\,\cdot\, D\varphi\,dx\right|\,=\,0,
\]
and the result follows.
\end{proof}

\begin{Remark}[The case $(f_n)_{n\in\mathbb{N}}\subset L^q(B_1)$, $2d<q<\infty$]\label{rem_stablq}\normalfont
Proposition \ref{prop_stability} holds true also in case $(f_n)_{n\in\mathbb{N}}\subset L^q(B_1)$. In fact, it suffices to observe that 
\[
	\int_{B_1}\left|f_n(x)\varphi(x)\right|dx\,\longrightarrow\,0\;\;\;\;\;\mbox{as}\;\;\;\;\;n\to\infty
\]
if $\left\|f_n\right\|_{L^q(B_1)}\,\to\,0$ as $n\to\infty$.
\end{Remark}

\section{Proof of Theorem \ref{teo_main}}\label{sec_proofteo2}

In this section we establish Theorem \ref{teo_main}. In fact, we produce an oscillation control for the solutions to \eqref{eq_main} in balls of radii $0<r\ll 1$.

Our argument explores the interplay between the radius $r$ and the norm of the gradient at the origin. Therefore, the quantity $\left|Du(0)\right|$ sets the threshold for the analysis. For every $\alpha\in(0,1)$, we distinguish the cases $\left|Du(0)\right|<r^\alpha$ and $\left|Du(0)\right|\geq r^\alpha$. Although our focus is at the origin, a change of variables localizes the argument at any point $x_0\in B_1$. We continue with the analysis of the points where the gradient is small. 

\subsection{Oscillation estimates along the critical set}\label{subsec_osccontcrit}

Along the regions where the gradient is small, the oscillation control depends on the notion of \emph{small correctors}. We start by examining the existence of those structures in the context of solutions to \eqref{eq_main}.

\begin{Proposition}[Existence of small correctors]\label{prop_smallcorr}
Let $u\in W^{1,p}(B_1)$ be a bounded weak solution to \eqref{eq_main}. Suppose A\ref{assump_p} and A\ref{assump_f} are in effect. Given $\delta>0$, there exists $\varepsilon_0>0$ such that if
\[
	\left|p\,-\,2\right|\,+\,\left\|f\right\|_{L^\infty(B_1)}\,<\,\varepsilon_0,
\]
then, we can find $\xi\in\mathcal{C}^1(B_{9/10})$ satisfying
\[
	\left|\xi(x)\right|\,<\,\delta\;\;\;\;\;\;\;\;\mbox{and}\;\;\;\;\;\;\;\;\left|D\xi(x)\right|\,<\,\delta
\]
in $B_{9/10}$ and 
\[
	\Delta\left(u\,+\,\xi\right)\,=\,0\;\;\;\;\;\mbox{in}\;\;\;\;\;B_{9/10}.
\]
\end{Proposition}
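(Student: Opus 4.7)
The natural strategy is a contradiction argument powered by the sequential stability of Proposition \ref{prop_stability}. After the standard normalization (WLOG $\|u\|_{L^\infty(B_1)} \le 1$, obtained by dividing $u$ by $\|u\|_{L^\infty(B_1)} + 1$ and rescaling $f$ accordingly, which is harmless since both $|p-2|$ and $\|f\|_{L^\infty}$ only need to be small), suppose the statement fails. Then there exist $\delta_0 > 0$ and, for each $n \in \mathbb{N}$, exponents $p_n > 2$, source terms $f_n \in L^\infty(B_1)$, and bounded weak solutions $u_n$ to
\[
    \div(|Du_n|^{p_n-2} Du_n)\, =\, f_n \;\;\;\; \text{in } B_1,
\]
with $|p_n - 2| + \|f_n\|_{L^\infty(B_1)} < 1/n$, such that no function $\xi \in \mathcal{C}^1(B_{9/10})$ with $|\xi|, |D\xi| < \delta_0$ in $B_{9/10}$ satisfies $\Delta(u_n + \xi) = 0$ in $B_{9/10}$.

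Next I would invoke the preliminary compactness of Lemma \ref{lem_reg}: since $f_n \in L^\infty(B_1) \subset L^q(B_1)$ for every $q > 2d$, we get $u_n \in \mathcal{C}^{1,\beta}_{\mathrm{loc}}(B_1)$ together with
\[
    \|u_n\|_{\mathcal{C}^{1,\beta}(B_{9/10})}\, \le\, C\bigl(\|u_n\|_{L^\infty(B_1)} + \|f_n\|_{L^\infty(B_1)}\bigr)\, \le\, C,
\]
where the constant $C$ depends continuously on $p_n$ and therefore remains bounded along the sequence (cf. the estimates in \cite{DiB,Tolksdorf}). Arzelà–Ascoli then produces, along a subsequence, a limit $u_\infty \in \mathcal{C}^1(B_{9/10})$ such that $u_n \to u_\infty$ in $\mathcal{C}^1(B_{9/10})$. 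Proposition \ref{prop_stability} immediately yields that $u_\infty$ is harmonic in $B_{9/10}$.

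Finally, the candidate corrector is $\xi_n := u_\infty - u_n \in \mathcal{C}^1(B_{9/10})$. The $\mathcal{C}^1$ convergence gives $\|\xi_n\|_{L^\infty(B_{9/10})} + \|D\xi_n\|_{L^\infty(B_{9/10})} \to 0$, so for $n$ sufficiently large we have $|\xi_n|, |D\xi_n| < \delta_0$ in $B_{9/10}$. Meanwhile, $u_n + \xi_n \equiv u_\infty$, whence $\Delta(u_n + \xi_n) = 0$ in $B_{9/10}$, contradicting the choice of $u_n$. This establishes the existence of $\varepsilon_0 = \varepsilon_0(\delta, d)$.

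The main obstacle is verifying that the $\mathcal{C}^{1,\beta}$ estimate from Lemma \ref{lem_reg} is \emph{uniform} along the sequence. Since the constant in DiBenedetto/Tolksdorf-type estimates depends on the exponent $p_n$, one must check that this dependence is continuous near $p = 2$ so the bounds do not degenerate as $p_n \to 2$; this is standard but requires care, and is precisely the compactness ingredient that makes the tangential passage to the Laplacian work.
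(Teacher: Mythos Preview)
Your argument is correct and follows essentially the same route as the paper: a contradiction argument using Lemma~\ref{lem_reg} for uniform $\mathcal{C}^{1,\beta}$ compactness, Proposition~\ref{prop_stability} to identify the limit as harmonic, and the choice $\xi_n := u_\infty - u_n$ to reach the contradiction. The paper handles your ``main obstacle'' (uniformity of the $\mathcal{C}^{1,\beta}$ estimate as $p_n \to 2$) in a separate remark immediately after the proof, citing \cite{DiB} and noting that $p_n \in (2,3)$ and $\|f_n\|_{L^\infty} \ll 1$ for large $n$.
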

\begin{proof}
We prove the proposition by contradiction; suppose its statement is false. Then, we can find $\delta_0>0$, a sequence $(p_n)_{n\in\mathbb{N}}$ and sequences of functions $(u_n)_{n\in\mathbb{N}}$ and $(f_n)_{n\in\mathbb{N}}$ such that
\[
	\div\left(\left|Du_n\right|^{p_n-2}Du_n\right)\,=\,f_n\;\;\;\;\;\mbox{in}\;\;\;\;\;B_{9/10},
\]
\[
	\left|p_n\,-\,2\right|\,+\,\left\|f_n\right\|_{L^\infty(B_1)}\,<\,\frac{1}{n},
\]
but for every $\xi\in\mathcal{C}^1(B_{9/10})$ with
\[
	\Delta\left(u_n\,+\,\xi\right)\,=\,0\;\;\;\;\;\mbox{in}\;\;\;\;\;B_{9/10},
\]
we have either 
\[
	\left|\xi(x)\right|\,\geq\,\delta_0\;\;\;\;\;\;\;\;\;\;\mbox{or}\;\;\;\;\;\;\;\;\;\;\left|D\xi(x)\right|\,\geq\,\delta_0,
\]
for some $x\in B_{9/10}$.

Because of Lemma \ref{lem_reg}, we know that $(u_n)_{n\in\mathbb{N}}$ is equibounded in $\mathcal{C}^{1,\beta}(B_{9/10})$, for some $\beta\in(0,1)$. Therefore, there is $u_\infty\in\mathcal{C}^{1,\gamma}(B_{9/10})$, for every $0<\gamma<\beta$, so that $u_n\to u_\infty$ in $\mathcal{C}^{1,\gamma}(B_{9/10})$, through a subsequence, if necessary. By the stability of weak solutions, Proposition \ref{prop_stability}, we infer that
\[
	\Delta u_\infty\,=\,0\;\;\;\;\;\mbox{in}\;\;\;\;B_{9/10}.
\]
Set $\xi_n:=u_\infty-u_n$. It follows from the above that
\[
	\Delta\left(u_n\,+\,\xi_n\right)\,=\,0\;\;\;\;\;\mbox{in}\;\;\;\;\;B_{9/10}
\]
and
\[
	\left|\xi_n(x)\right|\,+\,\left|D\xi_n(x)\right|\,<\,\delta_0,
\]
for every $x\in B_{9/10}$ and $n\gg 1$. This leads to a contradiction and completes the proof.
\end{proof}

\begin{Remark}\normalfont
The fact that $(u_n)_{n\in\mathbb{N}}$ is equibounded in $\mathcal{C}^{1,\beta}_{loc}(B_1)$, for some $\beta\in(0,1)$, follows from the observation that, for $n\gg 1$, we have $p_n\in(2,3)$ and $\left\|f_n\right\|_{L^\infty(B_1)}\ll 1$. That is, the sequences $(p_n)_{n\in\mathbb{N}}$ and $(f_n)_{n\in\mathbb{N}}$ are uniformly bounded. For technical details underlying the argument, we refer the reader to \cite{DiB}; see also \cite[Chapter 2]{Lady}.
\end{Remark}

Once small correctors are available for the solutions to $\eqref{eq_main}$, we produce an oscillation control. This is intrinsically gradient-dependent.

\begin{Proposition}[Oscillation control]\label{prop_osc1}
Let $u\in W^{1,p}(B_1)$ be a bounded weak solution to \eqref{eq_main}. Suppose A\ref{assump_p} and A\ref{assump_f} are in force. Take $\alpha\in(0,1)$, arbitrarily. Then, there exist $\varepsilon_0>0$ and $0<\rho\ll1$ such that, if
\begin{equation}\label{eq_smallregcorr61}
	\left|p\,-\,2\right|\,+\,\left\|f\right\|_{L^\infty(B_1)}\,<\,\varepsilon_0,
\end{equation}
we have
\[
	\sup_{B_\rho}\,\left|u(x)\,-\,u(0)\right|\,\leq\,\rho^{1+\alpha}\,+\,\left|Du(0)\right|\rho.
\]
\end{Proposition}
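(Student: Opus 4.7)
The strategy is to transfer the $\mathcal{C}^{1,1}$-estimate available for harmonic functions to $u$, paying a controlled price equal to the $\mathcal{C}^1$-size of the corrector. As a preliminary normalization, I would assume without loss of generality that $\|u\|_{L^\infty(B_1)}\leq 1$, since \eqref{eq_main} is homogeneous in $u$ up to a rescaling of the source. Fix $\alpha\in(0,1)$ and let $\delta>0$ be a parameter to be chosen at the end. Invoking Proposition \ref{prop_smallcorr}, there exists $\varepsilon_0=\varepsilon_0(\delta)>0$ such that the smallness condition \eqref{eq_smallregcorr61} produces $\xi\in\mathcal{C}^1(B_{9/10})$ with $|\xi|,\,|D\xi|<\delta$ and $h:=u+\xi$ harmonic in $B_{9/10}$.

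Since $h$ is harmonic and $\|h\|_{L^\infty(B_{9/10})}\leq 1+\delta\leq 2$, classical interior regularity yields a universal constant $C_0>0$ such that
\[
    \sup_{x\in B_\rho}\bigl|h(x)\,-\,h(0)\,-\,Dh(0)\cdot x\bigr|\,\leq\,C_0\,\rho^2
\]
for every $0<\rho\leq 1/2$. Writing $u(x)-u(0)=[h(x)-h(0)-Dh(0)\cdot x]+Dh(0)\cdot x-[\xi(x)-\xi(0)]$, and using $Dh(0)=Du(0)+D\xi(0)$ together with the bounds $|D\xi(0)|<\delta$ and $|\xi(x)-\xi(0)|\leq\delta|x|$ (from the mean value inequality applied to $\xi$), I arrive at
\[
    \sup_{x\in B_\rho}\bigl|u(x)\,-\,u(0)\bigr|\,\leq\,C_0\rho^2\,+\,2\delta\rho\,+\,|Du(0)|\,\rho
\]
for every $0<\rho\leq 1/2$.

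It remains to absorb the first two terms on the right-hand side into $\rho^{1+\alpha}$. To that end, I first select $\rho\in(0,1/2)$ so small that $C_0\rho\leq \tfrac{1}{2}\rho^\alpha$, which determines $\rho=\rho(\alpha)$. Next, I choose $\delta$ so that $2\delta\leq \tfrac{1}{2}\rho^\alpha$; this fixes $\delta=\delta(\alpha)$ and, through Proposition \ref{prop_smallcorr}, the corresponding $\varepsilon_0=\varepsilon_0(\alpha)$. With these choices, $C_0\rho^2+2\delta\rho\leq\rho^{1+\alpha}$, and the conclusion follows.

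\textbf{Main obstacle.} The analytical input is essentially off-the-shelf: the approximation lemma supplies a harmonic companion, and harmonic functions admit sharp Taylor expansions. The delicate point lies in the order in which the parameters are selected. The radius $\rho$ must be fixed first depending only on $\alpha$ and the universal constant $C_0$; only afterwards can the corrector size $\delta$ be calibrated against the already-chosen $\rho^\alpha$; and $\varepsilon_0$ is then extracted from Proposition \ref{prop_smallcorr}. This ordering is what makes the inequality homogeneous in $|Du(0)|$ and genuinely produces the gradient-dependent threshold that will drive the iteration in the next subsection.
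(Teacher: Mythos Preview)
Your proof is correct and follows essentially the same approach as the paper: invoke the small-corrector lemma to approximate $u$ by a harmonic function $h=u+\xi$, use the $\mathcal{C}^{1,1}$ Taylor estimate for $h$, and then fix first $\rho$ (against the universal constant) and afterwards $\delta$ (hence $\varepsilon_0$). The only cosmetic difference is that you exploit the Lipschitz bound $|\xi(x)-\xi(0)|\leq\delta|x|$ to produce a $2\delta\rho$ error term, whereas the paper bounds $|\xi(x)|+|\xi(0)|+|D\xi(0)\cdot x|$ directly by $\delta$; either route leads to the same parameter choices.
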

\begin{proof}
Take $\delta>0$ to be determined further. Suppose $\varepsilon_0>0$ in \eqref{eq_smallregcorr61} is chosen so that there exists a small corrector $\xi\in\mathcal{C}^1(B_1)$ satisfying
\[
	\left|\xi(x)\right|\,<\,\frac{\delta}{3}\;\;\;\;\;\;\;\;\;\;\mbox{and}\;\;\;\;\;\;\;\;\;\;\left|D\xi(x)\right|\,<\,\frac{\delta}{3}
\]
in $B_{9/10}$, with $u\,+\,\xi$ harmonic in $B_{9/10}$. Hence, for $x\in B_\rho$, we have
\begin{align}\label{eq_ctp1}
	\nonumber\left|u(x)-\left[u(0)+Du(0)\cdot x\right]\right|&\leq\left|(u+\xi)(x)-\left[(u+\xi)(0)+D(u+\xi)(0)\cdot x\right]\right|\\ \nonumber
	&\quad+\,\left|\xi(x)\right|\,+\,\left|\xi(0)\right|\,+\,\left|D\xi(0)\,\cdot\, x\right|
	\\&\leq\, C\rho^2\,+\,\delta.
\end{align}
Notice that 
\begin{align*}
	\left\|u\,+\,\xi\right\|_{\mathcal{C}^2(B_{9/10})}\,&\leq\,C\left(\left\|u\right\|_{L^\infty(B_1)}\,+\,\left\|\xi\right\|_{L^\infty(B_{9/10})}\right)\,\leq\,C,
\end{align*}
since we always choose $\delta<1$ and $u$ is a bounded solution. Therefore, the constant $C>0$ in \eqref{eq_ctp1} depends solely on the dimension. 
We then make the universal choices
\[
	\rho\,:=\,\left(\frac{1}{2C}\right)^\frac{1}{1\,-\,\alpha}\;\;\;\;\;\;\;\;\;\;\mbox{and}\;\;\;\;\;\;\;\;\;\;\delta\,:=\,\frac{\rho^{1+\alpha}}{2}
\]
and conclude the proof.
\end{proof}

Finally, we combine Proposition \ref{prop_osc1} with an auxiliary function to unveil a finer oscillation control for $u$ in terms of $\left|Du(0)\right|$.

\begin{Proposition}\label{prop_erre1}
Let $u\in W^{1,p}(B_1)$ be a bounded weak solution to \eqref{eq_main}. Suppose A\ref{assump_p} and A\ref{assump_f} are in effect. Suppose $f\in L^\infty(B_1)$. Given $\alpha\in(0,1)$ there exists $\varepsilon =  \varepsilon(d,\alpha) >0$ to be determined further such that if $p>2$ satisfies a proximity regime of the form
\[
	\left|p\,-\,2\,\right|\,+\,\left\|f\right\|_{L^\infty(B_1)}\,<\,\varepsilon,
\]
then there exists $C>0$ such that
\[
	\sup_{B_r}\,\left|u(x)\,-\,u(0)\right|\,\leq\,Cr^{1+\alpha}\left(1\,+\,\left|Du(0)\right|r^{-\alpha}\right),
\]
for every $0<r\ll 1$.
\end{Proposition}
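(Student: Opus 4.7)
The plan is to iterate Proposition \ref{prop_osc1} on the geometric sequence of scales $\rho^k$, where $\rho$ is the universal radius provided by that proposition. Concretely, I would prove by induction on $k\in\mathbb{N}$ the discrete estimate
\[
\sup_{B_{\rho^k}}\,\left|u(x)\,-\,u(0)\right|\,\leq\,K\left(\rho^{k(1+\alpha)}\,+\,\left|Du(0)\right|\rho^k\right),
\]
for a universal constant $K=K(\alpha)\geq 1$ to be chosen below. The base case $k=1$ is exactly Proposition \ref{prop_osc1}. The key auxiliary function is the rescaling
\[
v(x)\,:=\,\frac{u(\rho^k x)\,-\,u(0)}{L_k},\qquad L_k\,:=\,K\left(\rho^{k(1+\alpha)}\,+\,\left|Du(0)\right|\rho^k\right),
\]
which satisfies $v(0)=0$, $\left\|v\right\|_{L^\infty(B_1)}\leq 1$ by the inductive hypothesis, and $\left|Dv(0)\right|=\rho^k\left|Du(0)\right|/L_k\leq 1/K\leq 1$.

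A direct computation using the scaling of the $p$-Laplacian shows that $v$ satisfies $\div(|Dv|^{p-2}Dv)=f_k$ in $B_1$, where $f_k(x):=\rho^{kp}L_k^{1-p}f(\rho^k x)$. Using $L_k\geq \rho^{k(1+\alpha)}$ one finds
\[
\left\|f_k\right\|_{L^\infty(B_1)}\,\leq\,\rho^{k\left(1\,-\,\alpha(p-1)\right)}\left\|f\right\|_{L^\infty(B_1)}.
\]
The crucial point is to choose the smallness threshold $\varepsilon$ so that $1-\alpha(p-1)\geq 0$, i.e.\ $\varepsilon<(1-\alpha)/\alpha$; this is precisely where the $\alpha$-dependence of $\varepsilon$ in the statement enters. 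Under this choice $\left\|f_k\right\|_{L^\infty(B_1)}\leq\left\|f\right\|_{L^\infty(B_1)}<\varepsilon_0$, so $v$ lies within the hypotheses of Proposition \ref{prop_osc1}. Applying that proposition to $v$ and translating back gives
\[
\sup_{B_{\rho^{k+1}}}\left|u(y)-u(0)\right|\,\leq\,L_k\rho^{1+\alpha}\,+\,\left|Du(0)\right|\rho^{k+1}\,=\,K\rho^{(k+1)(1+\alpha)}\,+\,K\left|Du(0)\right|\rho^{k+1+\alpha}\,+\,\left|Du(0)\right|\rho^{k+1}.
\]
To close the induction with the same constant $K$, it suffices that $K\rho^\alpha+1\leq K$, i.e.\ $K\geq 1/(1-\rho^\alpha)$, which is a valid universal choice depending only on $\alpha$.

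Once the discrete bound is established, the continuous estimate follows by a routine interpolation: for arbitrary $0<r\ll 1$, pick $k$ with $\rho^{k+1}\leq r<\rho^k$ and use monotonicity of the supremum together with $\rho^k\leq r/\rho$. This produces the desired constant $C\sim K\rho^{-(1+\alpha)}$. The main obstacle in the argument is the verification that the rescaled auxiliary function $v$ stays inside the smallness regime of Proposition \ref{prop_osc1}; the $p$-Laplacian's nonlinear scaling makes the rescaled source carry the factor $\rho^{k(1-\alpha(p-1))}$, and it is exactly the requirement that this exponent be nonnegative which forces the $\alpha$-dependence of $\varepsilon$. Everything else is bookkeeping and a geometric sum.
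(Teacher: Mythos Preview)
Your proposal is correct and follows essentially the same approach as the paper: induction on the geometric scales $\rho^k$ via the rescaled auxiliary function, the observation that the rescaled source picks up the factor $\rho^{k(1-\alpha(p-1))}$ (forcing $|p-2|<(1-\alpha)/\alpha$), and a final discrete-to-continuous interpolation. Your explicit introduction of the constant $K\geq 1/(1-\rho^\alpha)$ is in fact a small improvement over the paper's presentation, where the inductive claim is stated with constant $1$ and the extra term $|Du(0)|\rho^{k+1+\alpha}$ is absorbed somewhat loosely by the remark ``because $\rho^\alpha<1$''; your version closes the induction cleanly.
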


\begin{proof}
The proof is presented in two steps. We begin by producing an oscillation control at discrete scales of the form $\rho^n$, with $n\in\mathbb{N}$.

\bigskip

\noindent{\bf Step 1}

\bigskip
In what follows we verify that, for every $n\in\mathbb{N}$,
\begin{equation}\label{eq_claimsolvesitall}
	\sup_{B_{\rho^n}}\,\left|u(x)\,-\,u(0)\right|\,\leq\,\left(\rho^{n(1+\alpha)}\,+\,\left|Du(0)\right|\rho^n\right).
\end{equation}

We prove \eqref{eq_claimsolvesitall} by induction in $n\in\mathbb{N}$. The case $n=1$ follows from Proposition \ref{prop_osc1}. Suppose the case $n=k$ has been already verified. We establish the case $n=k+1$. Define $v_k:B_1\to\mathbb{R}$ as follows:
\[
	v_k(x)\,:=\,\frac{u(\rho^kx)\,-\,u(0)}{\rho^{k(1+\alpha)}\,+\,\left|Du(0)\right| \rho^k}.
\]
The induction hypothesis ensures that $v_k$ is bounded in $L^\infty(B_1)$. In addition, 
\[
	Dv_k(x)\,=\,\frac{\rho^kDu(\rho^kx)}{\rho^{k(1+\alpha)}\,+\,\left|Du(0)\right|\rho^k}\;\;\;\;\;\;\mbox{and}\;\;\;\;\;\;\;Dv_k(0)\,=\,\frac{\rho^kDu(0)}{\rho^{k(1+\alpha)}\,+\,\left|Du(0)\right|\rho^k}.
\]
Moreover, $v_k$ is a weak solution to
\[
	\div\left(\left|Dv_k(x)\right|^{p-2}Dv_k(x)\right)\,=\,f_k(x)\;\;\;\;\;\mbox{in}\;\;\;\;\;B_1,
\]
where
\begin{equation}\label{eq_destravalp}
	f_k(x)\,:=\,\frac{\rho^{kp}f(\rho^kx)}{\left(\rho^{k(1+\alpha)}\,+\,\left|Du(0)\right|\rho^k\right)^{p-1}}.
\end{equation}
Since $f\in L^{\infty}(B_1)$, we have that $f_k \in L^{\infty}(B_1)$ provided
\[
	|p\,-\,2|\, \leq\, \frac{1}{\alpha}\,-\,1. 
\]
Set
\[
	\varepsilon^* \,:=\, \min\left\{\varepsilon_0,\frac{1}{\alpha}-1\right\}
\]
and impose
\[
	|p-2|\, \leq\, \frac{1}{2}\varepsilon^*;
\]
then, we can apply Proposition \ref{prop_osc1} to $v_k$. 

Hence, we have
\begin{equation}\label{eq_vktou}
	\sup_{B_{\rho}}\,\left|v_k(x)\,-\,v_k(0)\right|\,\leq\,\rho^{1+\alpha}\,+\,\left|Dv_k(0)\right|\rho.
\end{equation}
The definition of $v_k$ and \eqref{eq_vktou} imply 
\begin{equation}\label{eq_discretev}
	\sup_{B_{\rho^{k+1}}}\,\left|u(x)\,-\,u(0)\right|\,\leq\,\rho^{(k+1)(1+\alpha)}\,+\,\left|Du(0)\right|\rho^{k+1+\alpha}\,+\,\left|Du(0)\right|\rho^{k+1}.
\end{equation}
Because $\rho^\alpha<1$, we have established \eqref{eq_claimsolvesitall}.

\bigskip

\noindent {\bf Step 2}

\bigskip

Next we put forward a discrete-to-continuous argument, built upon \eqref{eq_discretev}. Let $0<r\ll 1$ and take $k\in\mathbb{N}$ such that $\rho^{k+1}\leq r\leq \rho^k$. We have
\begin{align*}
	\sup_{B_r}\,\frac{\left|u(x)\,-\,u(0)\right|}{r^{1+\alpha}}\,&\leq\,\sup_{B_{\rho^k}}\,\frac{\left|u(x)\,-\,u(0)\right|}{\rho^{(k+1)(1+\alpha)}}\\
		&\leq \, \frac{C\left(\rho^{k(1+\alpha)}\,+\,\left|Du(0)\right|\rho^{k}\right)}{\rho^{(k+1)(1+\alpha)}}\\
		&\leq \, \frac{C}{\rho^{1+\alpha}}\left(1\,+\,\frac{|Du(0)|}{\rho^{k\alpha}}\right)\\
		& \leq \,C_\alpha\left(1\,+\,\left|Du(0)\right|r^{-\alpha}\right),
\end{align*}
where 
\[
	C_\alpha\,:=\,\frac{C}{\rho^{1+\alpha}}.
\]
We notice the exponent $\alpha$ is fixed; in addition, the constant $\rho$ is chosen universally. Therefore, $C_\alpha$ depends only on universal quantities and the fixed parameter $\alpha$, which completes the proof.
\end{proof}

In the sequel, we consider balls of radius $r>\left|Du(0)\right|^{1/\alpha}$. We combine intrinsic gradient-dependent estimates with this condition.

\begin{Proposition}\label{prop_gradsmall}
Let $u\in W^{1,p}(B_1)$ be a bounded weak solution to \eqref{eq_main}. Suppose A\ref{assump_p} and A\ref{assump_f} are in effect. Suppose further that
\[
	\left|p\,-\,2\right|\,+\,\left\|f\right\|_{L^\infty(B_1)}\,<\,\varepsilon_0.
\]
Let $\alpha\in(0,1)$. For $\left|Du(0)\right|<r^\alpha\ll 1$, there exists $C>0$ such that
\[
	\sup_{B_r}\,\left|u(x)\,-\,u(0)\right|\,\leq\,Cr^{1+\alpha}.
\]
\end{Proposition}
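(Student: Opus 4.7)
The plan is very short: Proposition \ref{prop_gradsmall} is essentially an immediate corollary of Proposition \ref{prop_erre1}, obtained by specializing to the small-gradient regime. Proposition \ref{prop_erre1}, under the same smallness hypothesis $|p-2|+\|f\|_{L^\infty(B_1)}<\varepsilon_0$, already supplies the oscillation estimate
\[
\sup_{B_r}\,|u(x)-u(0)|\,\leq\,C r^{1+\alpha}\bigl(1\,+\,|Du(0)|\,r^{-\alpha}\bigr)
\]
for every $0<r\ll 1$, with a constant $C>0$ that depends only on $d$ and $\alpha$.

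I would then exploit the hypothesis $|Du(0)|<r^\alpha$ directly. This inequality is equivalent to $|Du(0)|\,r^{-\alpha}<1$, so the bracket appearing in the estimate above is bounded by $2$. Substituting this bound yields
\[
\sup_{B_r}\,|u(x)-u(0)|\,\leq\,2C r^{1+\alpha},
\]
and redefining the constant ($C':=2C$) produces the claim.

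I do not anticipate any genuine obstacle here; the proposition is not introducing new content but rather isolating, as a clean self-contained statement, the regime in which the gradient at the origin is negligible compared to the scale $r^\alpha$. This corresponds precisely to the region in the dichotomy illustrated in Figure 1 where the \emph{$\mathcal{C}^1$-small correctors} machinery developed in Propositions \ref{prop_smallcorr}--\ref{prop_erre1} provides the desired gain of regularity. The complementary regime $|Du(0)|\geq r^\alpha$, where the equation is effectively uniformly elliptic at scale $r$, is to be handled by a separate argument and then matched with the present estimate, so that a single oscillation bound of order $r^{1+\alpha}$ holds in both regimes and thereby yields Theorem \ref{teo_main}.
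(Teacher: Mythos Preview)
Your proposal is correct and matches the paper's own proof essentially line for line: the paper simply invokes Proposition~\ref{prop_erre1} and observes that $|Du(0)|<r^\alpha$ forces $|Du(0)|r^{-\alpha}<1$, absorbing the bracket into the constant. Your additional contextual remarks about the dichotomy are accurate but not needed for the argument itself.
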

\begin{proof}
Under the condition $\left|Du(0)\right|<r^\alpha$, Proposition \ref{prop_erre1} yields
\begin{align*}
	\sup_{B_r}\,\left|u(x)\,-\,u(0)\right|\,\leq\,Cr^{1+\alpha}\left(1\,+\,\left|Du(0)\right|r^{-\alpha}\right)\,\leq\,Cr^{1+\alpha},
\end{align*}
and the proof is finished.
\end{proof}

We observe that Proposition \ref{prop_gradsmall} produces an oscillation control in the regions where the gradient of solutions \emph{is small}. To prove Theorem \ref{teo_main} we must also take into account the case of the regions where the gradient is large. This is the topic of the next section.

\subsection{Oscillation control in the non-critical set}\label{subsec_gradlarge}

Here we suppose that $\left|Du(0)\right|\geq r^{\alpha}$. Set $\lambda:=\left|Du(0)\right|^{1/\alpha}$ and define $v:B_1\to\mathbb{R}$ as
\[
	v(x)\,:=\,\frac{u(\lambda x)\,-\,u(0)}{\lambda^{1\,+\,\alpha}}.
\]
We notice that $v(0)=0$, 
\begin{equation}\label{eq_ncritical0}
	\left|Dv(0)\right|=1
\end{equation}
 and
\begin{equation}\label{eq_ncritical1}
	\div\left(\left|Dv(x)\right|^{p-2}Dv(x)\right)\,=\,\frac{\lambda^p\,f(\lambda x)}{\lambda^{(1+\alpha)(p-1)}}\;\;\;\;\;\mbox{in}\;\;\;\;\;B_1.
\end{equation}

The regularity theory available for \eqref{eq_ncritical1}, together with \eqref{eq_ncritical0}, implies that $\left|Dv(x)\right|>1/2$ in $B_\mu$, for some $\mu>0$. Therefore, $v$ solves a uniformly elliptic equation with a bounded source term in $B_\mu$. From standard results in elliptic regularity theory, we infer that $v\in\mathcal{C}^{1,\alpha}(B_{9\mu/10})$, for every $\alpha\in(0,1)$. Hence, for every $\alpha\in(0,1)$, there exists $C>0$ such that
\[
	\sup_{B_r}\,\left|v(x)\,-\,\left[v(0)\,+\,Dv(0)\cdot x\right]\right|\,\leq\,Cr^{1+\alpha},
\]
for every $0<r\ll9\mu/10$. The former inequality reads
\[
	\sup_{B_r}\,\left|\frac{u(\lambda x)\,-\,u(0)}{\lambda^{1+\alpha}}\,-\,\frac{\lambda Du(0)\cdot x}{\lambda^{1+\alpha}}\right|\,\leq\,Cr^{1+\alpha}.
\]
In turn, it leads to 
\[
	\sup_{B_r}\,\left|u(x)\,-\,\left[u(0)\,+\,Du(0)\cdot x\right]\right|\,\leq\,Cr^{1+\alpha},
\]
for $0<r<9\lambda\mu/10$. As concerns the case $9\lambda\mu/10\leq r<\lambda$, we notice that
\begin{align}\label{eq_ing2}
	\nonumber\sup_{B_r}\,\left|u(x)\,-\,\left[u(0)\,+\,Du(0)\cdot x\right]\right|\,&\leq\,\sup_{B_\lambda}\,\left|u(x)\,-\,\left[u(0)\,+\,Du(0)\cdot x\right]\right|\\\nonumber
		&\leq\,\sup_{B_\lambda}\,\left|u(x)\,-\,u(0)\right|\,+\,\left|Du(0)\right|\lambda\\\nonumber
		&\leq\, \left(C\,+\,1\right)\lambda^{1+\alpha}\\\nonumber
		&\leq\, \left(C\,+\,1\right)\left(\frac{10r}{9\mu}\right)^{1+\alpha}\\
		&\leq\, Cr^{1+\alpha}
\end{align}
where the third inequality follows from a Taylor expansion of $u$ along with additional, elementary, facts. 
Now, we present the proof of Theorem \ref{teo_main}. 

\begin{proof}[Proof of Theorem \ref{teo_main}]
The result follows from Proposition \ref{prop_gradsmall} combined with \eqref{eq_ing2}.
\end{proof}

\bibliography{biblio}
\bibliographystyle{plain}

\bigskip

\paragraph{Acknowledgements:} Part of this work was prepared during the visit of the first and the third authors to the International Centre for Theoretical Physics, in Trieste. We are grateful to ICTP for the hospitality. We thank Prof. Eduardo Teixeira and Prof. Jos\'e Miguel Urbano for their insightful, sharp, comments on the material in this paper. We also thank two anonymous referees, whose suggestions improved substantially the present work.  EP is partially supported by CNPq-Brazil (Grants \#433623/2018-7 and \#307500/2017-9), FAPERJ (Grant \#E.200.021-2018) and Instituto Serrapilheira (Grant \# 1811-25904). GR is funded by CAPES and MS is partially supported by Arquimedes -- PUC-Rio, Brazil. This study was financed in part by the Coordena\c{c}\~ao de Aperfei\c{c}oamento de Pessoal de N\'ivel Superior - Brazil (CAPES) - Finance Code 001.

\bigskip

\noindent\textsc{Edgard A. Pimentel (Corresponding Author)}\\
Department of Mathematics\\
Pontifical Catholic University of Rio de Janeiro -- PUC-Rio\\
22451-900, G\'avea, Rio de Janeiro-RJ, Brazil\\
\noindent\texttt{pimentel@puc-rio.br}

\bigskip

\noindent\textsc{Giane C. Rampasso}\\
Department of Mathematics\\
Universidade Estadual de Campinas -- IMECC-Unicamp\\
13083-859, Cidade Universit\'aria, Campinas-SP, Brazil\\
\noindent\texttt{girampasso@ime.unicamp.br}

\bigskip

\noindent\textsc{Makson S. Santos}\\
Department of Mathematics\\
Pontifical Catholic University of Rio de Janeiro -- PUC-Rio\\
22451-900, G\'avea, Rio de Janeiro-RJ, Brazil\\
\noindent\texttt{makson.santos@mat.puc-rio.br}

\end{document}